\documentclass[10pt,a4paper]{amsart}
\usepackage[utf8]{inputenc}
\usepackage[T1]{fontenc}
\usepackage{amsmath}
\usepackage{amsfonts}
\usepackage{amssymb}
\usepackage[english]{babel}
\newtheorem{thm}{Theorem}

\newtheorem{con}{Conjecture}
\newtheorem{exa}{Example}
\newtheorem{rem}{Remark}
\makeatletter
\@namedef{subjclassname@2020}{%
  \textup{2020} Mathematics Subject Classification}
\makeatother

\title{On two conjectures regarding generalized sequence of derangements}
\author[E. Lipka]{Eryk Lipka}

\address{Institute of Mathematics 
\\Pedagogical University of Cracow
\\ul. Podchor\k{a}\.{z}ych 2, 30-348 Krak\'ow
\\Poland\\
}
\email{eryklipka0@gmail.com}

\author[P. Miska]{Piotr Miska}

\address{Institute of Mathematics 
\\Faculty of Mathematics and Computer Science
\\Jagiellonian University in Cracow
\\ul. {\L}ojasiewicza 6, 30-084 Krak\'ow
\\Poland\\
}
\email{piotr.miska@uj.edu.pl}

\keywords{derangement, geometric progression, prime divisor, recurrence} \subjclass[2020]{11B83, 11B99}

\begin{document}
\maketitle

\begin{abstract}
    In \cite{miska} the second author studied arithmetic properties of a class of sequences that generalize the sequence of derangements. The aim of the following paper is to disprove two conjectures stated in \cite{miska}. The first conjecture regards the set of prime divisors of their terms. The latter one is devoted to the order of magnitude of considered sequences.
\end{abstract}

\section{Introduction}
We denote the sets of non-negative integers, positive integers, prime numbers, integers and rational numbers by $\mathbb{N}$, $\mathbb{N}_+$, $\mathbb{P}$, $\mathbb{Z}$ and $\mathbb{Q}$, respectively.

If there exists $n_0\in\mathbb N$ such that some property holds for each integer $n\geq n_0$, then we will write, that this property holds for each $n\gg 0$.

We call that a sequence $(a_n)_{n\in\mathbb{N}}$ is an ultimately geometric progression with ratio $b$ if there are $c,b$ belonging to some ring such that $a_n=cb^n$ for each $n\gg 0$.

Having a given sequence $\textbf{a}=(a_n)_{n\in\mathbb{N}}$ we define the set
$$\mathcal P_{\bf a} = \left\{ p \in \mathbb P : \exists_{n \in \mathbb N}\quad p \mid a_n \right\}.$$

A derangement of a set with $n$ elements is a permutation of this set without any fixed points. We denote the number of derangements in $S_n$ by $D_n$. The sequence of numbers of derangements (or the sequence of derangements, for short) $\left(D_n\right)_{n \in \mathbb N}$ satisfies the following recurrence relation
\begin{equation}
    D_0=1,\quad D_n = nD_{n-1} + \left(-1\right)^n,\quad n>0.
\end{equation}
Let us consider a class $\mathcal{R}$ of generalizations of this sequence. For given $f,g,h \in \mathbb Z \left[ X \right ]$ we define the sequence $\mathbf{a}(f,g,h) = (a_n)_{n\in\mathbb{N}}$ by the recurrence
\begin{equation}
    a_0=g(0),\quad a_n = f(n)a_{n-1} + g(n)\left(h(n)\right)^n,\quad n>0.
\end{equation}

Let us notice that the class $\mathcal{R}$ contains many well-known sequences:
\begin{itemize}
\item if $f,h = 1, g = c\in\mathbb Z$, then $(a_n)_{n\in\mathbb N} = (c(n+1))_{n\in\mathbb N}$ is an arithmetic progression;
\item if $f = q\in\mathbb Z, g = c\in\mathbb Z, h = 0$, then $(a_n)_{n\in\mathbb N} = (cq^n)_{n\in\mathbb N}$ is a geometric progression;
\item if $f = 1, g = c\in\mathbb Z, h = q\in\mathbb Z$, then $(a_n)_{n\in\mathbb N} = (\sum_{j=0}^n cq^j)_{n\in\mathbb N}$ is the sequence of partial sums of a geometric progression;
\item if $f = X,g = 1, h = 0$, then $(a_n)_{n\in\mathbb N} = (n!)_{n\in\mathbb N}$ is the sequence of factorials;
\item if $f = 2X+l, l\in\{0,1\}, g = 1, h = 0$, then $(a_n)_{n\in\mathbb N} = ((2n+l)!!)_{n\in\mathbb N}$ is the sequence of double factorials.
\end{itemize}

The second author investigated arithmetic properties of this class of sequences in \cite{miska}, and he stated the following two conjectures.

\begin{con}[Conjecture 5.1. from \cite{miska}]\label{c1}
If $\textbf{a}=(a_n)_{n\in\mathbb{N}}\in\mathcal{R}$ is not a geometric progression, then the set $\mathcal P_{\bf a}$ is infinite.
\end{con}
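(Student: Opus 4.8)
The abstract announces that this conjecture will be \emph{disproved}, so rather than look for a proof I would search for a counterexample: a sequence $\mathbf a(f,g,h)\in\mathcal R$ that is not a geometric progression but for which $\mathcal P_{\mathbf a}$ is finite. The structural fact to exploit is that every member of $\mathcal R$ has a closed form built from the products $\prod_{k=1}^{n}f(k)$, so if $f$ has a positive integer root $m$ then from index $m$ onwards the sequence behaves very simply; the plan is to choose $f,g,h$ so that this tail is simple enough to introduce no new prime divisors, while the first few terms destroy the ratio pattern of a genuine geometric progression.

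A quick first attempt takes $h=0$. For $n\ge 1$ the recurrence becomes $a_n=f(n)a_{n-1}$, so $a_n=g(0)\prod_{k=1}^{n}f(k)$, and if $f$ has a positive integer root $m\ge 2$ the sequence is eventually $0$. For example $\mathbf a(X-2,1,0)=(1,-1,0,0,0,\dots)$ has $\mathcal P_{\mathbf a}=\varnothing$, and it is not a geometric progression because a progression $(cb^{n})_{n}$ that vanishes at index $2$ must already vanish at index $1$.

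Since that example may look like a technicality, I would also produce a non-degenerate family of sequences that are \emph{ultimately} geometric without being geometric. Fix $b\in\mathbb Z$, take $h=b$ (constant) and $g(X)=b-f(X)$, and set $e_n:=a_n-b^{\,n+1}$. Expanding the recurrence with $h(n)=b$, $g(n)=b-f(n)$ gives $e_n=f(n)e_{n-1}$ for $n\ge1$, hence $e_n=e_0\prod_{k=1}^{n}f(k)$ with $e_0=g(0)-b=-f(0)$. Consequently, if $f$ has a positive integer root $m$, then $a_n=b^{\,n+1}$ for all $n\ge m$, so $\mathcal P_{\mathbf a}$ is contained in the (finite) union of the primes dividing $b$ with the primes dividing the finitely many terms $a_0,\dots,a_{m-1}$. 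If moreover $f(0)\neq0$, then $a_0=b-f(0)\neq b$, so $\mathbf a$ is not the geometric progression $(b^{\,n+1})_n$; and for $b\neq0$ any geometric progression agreeing with $(b^{\,n+1})_n$ for large $n$ must equal it, so $\mathbf a$ is no geometric progression at all. A concrete instance is $f=X-1$, $b=2$, giving $g=3-X$ and $\mathbf a(X-1,3-X,2)=(3,4,8,16,32,\dots)$ with $\mathcal P_{\mathbf a}=\{2,3\}$.

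The only genuine obstacle I anticipate is the conceptual one: realising that the conjecture as stated is false. After that, the verifications --- membership in $\mathcal R$, finiteness of $\mathcal P_{\mathbf a}$, and the failure to be a geometric progression --- are routine; the one point needing a little care is to fix the precise meaning of ``geometric progression'' (here $a_n=cb^{n}$ for \emph{all} $n$) so that the spoiled initial segment genuinely excludes it. I would present the last construction as a family parametrised by such $f$ (with a positive integer root and $f(0)\neq0$) and by $b$, both to make the refutation robust and to show that the conjecture fails in an essentially arbitrary fashion.
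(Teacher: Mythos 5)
Your main construction is correct and is essentially the paper's approach: the paper's counterexamples are exactly sequences that are ultimately geometric but not geometric, obtained by taking $h=b$ constant, $g=c-\tfrac{c}{b}f$, and $f$ with a positive integer root (your family is the case $c=b$, and your error-term identity $e_n=f(n)e_{n-1}$ is a clean repackaging of the paper's direct computation). The concrete instance $\mathbf{a}(X-1,3-X,2)=(3,4,8,16,\dots)$ with $\mathcal P_{\mathbf a}=\{2,3\}$ is a valid refutation of Conjecture \ref{c1}.

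One flaw worth fixing: your ``quick first attempt'' $\mathbf{a}(X-2,1,0)=(1,-1,0,0,\dots)$ does \emph{not} have $\mathcal P_{\mathbf a}=\varnothing$. Since every prime divides $0$, any sequence with a zero term has $\mathcal P_{\mathbf a}=\mathbb P$, which is infinite; this is precisely why the paper's examples insist on ``without zero terms'' and on conditions like $bc\neq 0$. Discard that example (or restrict attention to sequences with no vanishing terms, as your second family does) and the refutation stands.
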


\begin{con}[Conjecture 5.2. from \cite{miska}]\label{c2}
If $\textbf{a}=(a_n)_{n\in\mathbb{N}}\in\mathcal{R}$ and there are $b,c \in \mathbb Z$ such that $a_n = cb^n$ for $n\gg 0$, then $a_n = cb^n$ for each $n \in \mathbb N$.
\end{con}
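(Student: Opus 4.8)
The plan is to disprove Conjecture \ref{c2} by exhibiting an explicit member of $\mathcal{R}$ that is eventually a geometric progression but fails to be one at the small indices. Since the recurrence defining $\mathbf{a}(f,g,h)$ is a first‑order recursion with leading factor $f(n)$, once a term lands on the target progression it cannot leave it, so any counterexample must deviate from $cb^{n}$ only on an initial segment; in particular, if $m$ is the largest index with $a_m\neq cb^{m}$, then comparing $a_{m+1}=f(m+1)a_m+g(m+1)(h(m+1))^{m+1}$ with $cb^{m+1}$ forces $f(m+1)=0$. So I would look among triples with $f$ having a small positive integer root.

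First I would record what ``$a_n=cb^{n}$ for all $n\geq N$'' imposes on $f,g,h$: substituting into the recurrence for $n>N$ gives the identity $g(n)(h(n))^{n}=c\,b^{n-1}(b-f(n))$ for every $n>N$. When $b\neq 0$, a comparison of growth rates eliminates all but a narrow range of triples: if $\deg h\geq 1$ and $g\not\equiv 0$ the left side grows super‑exponentially while the right side is only $O(n^{\deg f}|b|^{n})$, a contradiction; and if $h$ is a nonzero constant $\beta$, the same comparison forces $|\beta|=|b|$, the case $\beta=-b$ being ruled out because the resulting relation $(-1)^{n}g(n)\,b=c\,(b-f(n))$ would make the sign‑alternating quantity $(-1)^{n}g(n)$ equal to a polynomial in $n$, whence $g\equiv 0$. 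This leaves $h\equiv b$, which turns the identity into the polynomial identity $c\,f(X)=b\,(c-g(X))$; the remaining degenerate branches ($g\equiv 0$, or $f\equiv b$, or $c=0$ together with $h\equiv 0$) all make the sequence identically $0$ or genuinely geometric, so they contribute no counterexample.

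Next I would specialize to $c=1$, so the constraint reads $f(X)=b\,(1-g(X))$, $h(X)=b$, with $g$ free. Setting $e_n:=a_n-b^{n}$, a one‑line computation from the recurrence gives $e_n=b\,(1-g(n))\,e_{n-1}$, hence $e_n=(g(0)-1)\,b^{n}\prod_{k=1}^{n}(1-g(k))$. Thus $\mathbf{a}(b(1-X),g,b)$ equals $(b^{n})_{n}$ for all large $n$ exactly when $g(k_0)=1$ for some $k_0\geq 1$, and it differs from $(b^{n})_{n}$ at $n=0$ exactly when $g(0)\neq 1$. The polynomial $g(X)=X$ satisfies both conditions, so for every $b\in\mathbb{Z}\setminus\{0\}$ the sequence $\mathbf{a}(b(1-X),\,X,\,b)$ is $(0,b,b^{2},b^{3},\dots)$: it coincides with the geometric progression $(b^{n})_{n}$ for all $n\geq 1$, yet $a_0=0\neq 1=b^{0}$, and since $a_0=0$ would force the coefficient to vanish in any global representation $a_n=c'b'^{\,n}$, it is not a geometric progression on all of $\mathbb{N}$. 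This refutes Conjecture \ref{c2}.

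Finally I would give a short direct verification independent of the classification: with $f=b-bX$, $g=X$, $h=b$ one has $a_0=g(0)=0$, $a_1=f(1)a_0+g(1)\cdot b=b=b^{1}$, and if $a_{n-1}=b^{n-1}$ for some $n\geq 2$ then $a_n=(b-bn)b^{n-1}+n\,b^{n}=b^{n}$, so $a_n=b^{n}$ for all $n\geq 1$ by induction. I would also note the simpler ``eventually zero'' counterexample $\mathbf{a}(X-2,\,1,\,0)=(1,-1,0,0,\dots)$, which equals $0\cdot b^{n}$ for $n\geq 2$ although $a_0=1$. The one genuinely delicate ingredient is the growth‑rate argument that forces $h$ to be the constant $b$ together with the elimination of $\beta=-b$; once that classification is in hand, the construction and the proof that these are essentially the only counterexamples are routine consequences of the error recursion for $e_n$.
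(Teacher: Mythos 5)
Your disproof is correct and follows essentially the same route as the paper: you first derive the same structural constraints ($h\equiv b$ and $cf=b(c-g)$, i.e.\ $g=c-\tfrac{c}{b}f$) that the paper establishes in its two theorems, and your counterexample $\mathbf{a}(b(1-X),X,b)=(0,b,b^2,\dots)$ is an instance of the same family as the paper's Example 1 (there $f=b(2-X)$, $g=c(X-1)$, $h=b$). The explicit inductive verification makes your counterexample self-contained, so the small slip in your classification paragraph (the branch $c=0$, $h\equiv 0$ does in fact contribute counterexamples, as your own final example $\mathbf{a}(X-2,1,0)$ shows) does not affect the conclusion.
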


\section{Ultimately geometric sequences}

While attempting to prove the mentioned conjectures, we found unrecoverable error in proof, which led us to a rather simple counterexamples.

\begin{thm} Let $f,g,h\in\mathbb{Z}[X]$. If $(a_n)_{n\in\mathbb{N}} = \mathbf{a}(f,g,h)$ and there are $b,c \in \mathbb Z$ such that $a_n = cb^n$ for $n \gg 0$, then $h \equiv b$ or $gh \equiv 0$.
\end{thm}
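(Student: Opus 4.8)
\emph{Proof proposal.} The plan is to work entirely with the recurrence restricted to the range where the sequence has already stabilised to a geometric progression. Fix $n_0$ with $a_n=cb^n$ for all $n\ge n_0$. Then for every $n>n_0$ we have both $a_n=cb^n$ and $a_{n-1}=cb^{n-1}$, so the defining relation $a_n=f(n)a_{n-1}+g(n)h(n)^n$ rearranges to the key identity
\begin{equation}\label{key}
g(n)h(n)^n = cb^{n-1}\bigl(b-f(n)\bigr),\qquad n>n_0.
\end{equation}
Everything is extracted from \eqref{key}. First I would clear away the degenerate cases: if $g\equiv0$ or $h\equiv0$ then $gh\equiv0$ and we are done, so assume $g,h\not\equiv0$. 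Then $g(n)\ne0$ and $h(n)\ne0$ for all $n\gg0$, so the left-hand side of \eqref{key} is nonzero for large $n$. Reading \eqref{key}, this non-vanishing forces $c\ne0$, $b\ne0$ and $f\not\equiv b$, because in each of these excluded cases the right-hand side of \eqref{key} would be $0$ for all large $n$. I would record these three reductions before continuing.

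The core of the argument is a size estimate obtained by taking absolute values in \eqref{key} and extracting $n$-th roots:
\[
\frac{|h(n)|}{|b|} = \left(\frac{|c|\,|b-f(n)|}{|b|\,|g(n)|}\right)^{1/n}\qquad (n\gg0).
\]
For $n\gg0$ the bracketed quantity is a positive rational equal to $\Theta(n^k)$ for a fixed integer $k$, so its logarithm is $O(\log n)$ and its $n$-th root tends to $1$; hence $|h(n)|\to|b|$. Since a polynomial in $\mathbb Z[X]$ of degree $\ge1$ has $|h(n)|\to\infty$, this is possible only if $h$ is a constant $d\in\mathbb Z$, and then $|d|=|b|$, i.e. $d=b$ or $d=-b$ (using $b\ne0$).

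It remains to rule out $h\equiv-b$, and this is the one step that really uses that the right-hand side of \eqref{key} does not alternate in sign. If $h\equiv-b$ then $h(n)^n=(-1)^n b^n$, and \eqref{key} becomes $(-1)^n b\,g(n)=c\bigl(b-f(n)\bigr)$ for all $n>n_0$. Restricting to even $n$, respectively to odd $n$, and using that a polynomial is determined by its values on an infinite set, I obtain both $b\,g=c(b-f)$ and $-b\,g=c(b-f)$ in $\mathbb Z[X]$; adding them gives $2c(b-f)=0$, hence $f\equiv b$ since $c\ne0$, contradicting the reduction above. Therefore $d=b$, i.e. $h\equiv b$, which completes the proof.

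I expect the main obstacle to be the passage from \eqref{key} to ``$h$ is constant'': one has to recognise that the exponential factor $b^{n-1}$ pins down $|h(n)|$ to leading order while all the polynomial factors wash out after taking $n$-th roots, and then notice that integrality of $h$ leaves only the two candidates $h\equiv\pm b$, of which the sign $-b$ has to be excluded by the separate parity argument above. The remaining bookkeeping (the three degenerate reductions, and the elementary limit $\Theta(n^k)^{1/n}\to1$) is routine.
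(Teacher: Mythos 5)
Your proof is correct, and it takes a genuinely different route through the middle of the argument. Both proofs share the same skeleton: dispose of the degenerate case $gh\equiv 0$, show that $h$ must be a constant of absolute value $|b|$, and then eliminate $h\equiv -b$ by a parity argument. The difference is in how $|h|=|b|$ is obtained. The paper proves the two inequalities separately: first $|h|\le|b|$ by a domination estimate (if $|h(n)|>|b|$ then $|g(n)h(n)^n|$ outgrows $|cb^n|$, so $a_n\ne cb^n$ for large $n$), and then $|h|\ge|b|$ by an arithmetic argument, namely that for each prime power $p_i^{\alpha_i}$ in the factorization of $|b|$ the divisibility $p_i^{n\alpha_i}\mid a_n$ forces $p_i^{n-\alpha_i}\mid g(n)$ unless $p_i^{\alpha_i}\mid h$, which is impossible for a fixed nonzero $g$ and large $n$. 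Your exact identity $g(n)h(n)^n=cb^{n-1}\bigl(b-f(n)\bigr)$, read multiplicatively after extracting $n$-th roots, yields $|h(n)|\to|b|$ in one stroke, so you get both inequalities simultaneously and never touch the prime factorization of $b$. This is arguably cleaner; the small price is that you must first record $f\not\equiv b$ (so that $b-f$ is a nonzero polynomial and the bracketed ratio is genuinely $\Theta(n^k)$), which you do correctly via the non-vanishing of the left-hand side. Your exclusion of $h\equiv -b$ by restricting to even and odd $n$ and adding the two resulting polynomial identities is a tidier, fully rigorous version of the paper's sign-alternation argument. I see no gaps.
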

\begin{proof}
If $c=0$, then $a_n = 0$ for each sufficiently large positive integer $n$. This yields $gh \equiv 0$. Hence, we assume that $c,g,h\neq 0$.

If $h$ is not constant, then we have $|h(n)| > |b|$ for $n \gg 0$. Thus
$$|a_n| = |f(n)cb^{n-1} + g(n)h(n)^n|\ge |b+1|^n > |cb^n|,\quad n\gg 0.$$
Hence, $a_n \neq cb^n$ for sufficiently large $n$. If $h(n)$ is constant and $|h(n)| > |b|$, then again using the same argument we conclude that $|a_n| > |cb^n|$ for $n\gg 0$. This means that $a_n \neq cb^n$ for sufficiently large $n$. We get that $h$ is constant and $|h| \le |b|$.

If $b = 0$, then $h \equiv 0$, so from now on we assume that $b \neq 0$. If $|b| = 1$, then of course $|h| = |b|$. If $|b| > 1$, then we consider the prime decomposition
$$|b|= p_1^{\alpha_1} \cdot p_2^{\alpha_2}\cdot \ldots,\quad p_i \in \mathbb P, \alpha_i \in \mathbb N_+.$$

The divisibility $p_i^{n\alpha_i} | a_n$ combined with the recurrence for ${\bf a}(f,g,h)$ implies $$p_i^{(n-1)\alpha_i} | f(n)cb^{n-1} + g(n)h^n$$ and $p_i^{(n-1)\alpha_i} | g(n)h^n$ for $n\gg 0$.
If $p_i^{\alpha_i} \nmid h$, then we have $p_i^{n-\alpha_i} | g(n)$, which is a contradiction for arbitrary large $n$ as $g$ is a nonzero polynomial. As a result, $p_i^{\alpha_i} | h$ for every $i$, hence $b | h$. Finally, $|h| \ge |b|$, which leads us to equality $|h| = |b|$.

If $h = -b$, then using the recurrence for ${\bf a}(f,g,h)$ and $n\gg 0$ we have the following chain of equalities.
\begin{align*}
cb^n = & f(n)cb^{n-1} + g(n)b^n(-1)^n\\
cb = & f(n)c + g(n)b(-1)^n\\
f(n) = & b +g(n) \tfrac b c (-1)^{n-1}
\end{align*}
If $g$ is not constant, then the value of $f(n)$ changes sign infinitely many times, which is impossible for a polynomial. If $g$ is a nonzero constant then $f(n)$ would attain two values alternately which is also impossible for a polynomial. Finally we get $h = b$.
\end{proof}

A formula for a sequence $(a_n)_{n\in\mathbb{N}} = \mathbf{a}(f,g,h)$ with $gh \equiv 0$ can be rewritten as
\begin{equation}
    a_n = g(0)\prod_{i=1}^{n}f(i), \quad n\in\mathbb{N}.
\end{equation}
If such a sequence is an ultimately geometric progression, then $f$ is constant and the sequence is a geometric progression. However, if $gh \not\equiv 0$, then it is possible that $\textbf{a}(f,g,h)$ is an ultimately geometric progression but not a geometric progression.

\begin{thm} Let $f,g,h\in\mathbb{Z}[X]$. If $(a_n)_{n\in\mathbb{N}} = \mathbf{a}(f,g,h)$, $gh \not \equiv 0$ and there are $b,c \in \mathbb Z$ such that $a_n = cb^n$ for $n \gg 0$, then
$$g =c - \frac{c}{b} f.$$
Moreover, if $n_0 = \min\left\{n\in \mathbb N : f(n) = 0\right\}$, then $a_n = cb^n$ if and only if $n \ge n_0$.
\end{thm}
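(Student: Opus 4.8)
The plan is to feed the ultimate formula $a_n=cb^n$ into the recurrence and compare coefficients of polynomials. First I would invoke Theorem~1: since $gh\not\equiv0$, the dichotomy there forces $h\equiv b$. Moreover $b\neq0$ (otherwise $h\equiv0$, so $gh\equiv0$) and $c\neq0$ (if $c=0$ then $a_n=0$ for $n\gg0$, and writing the recurrence for two consecutive large indices gives $g(n)b^n=0$ for $n\gg0$, hence $g\equiv0$, a contradiction). So the recurrence is $a_n=f(n)a_{n-1}+g(n)b^n$ for $n>0$.

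Next, substitute $a_n=cb^n$ and $a_{n-1}=cb^{n-1}$, which hold for all $n\ge N$ for some $N$, and divide by $b^{\,n-1}$ (legitimate since $b\neq0$). This yields $cb=cf(n)+bg(n)$, i.e.
$$g(n)=c-\tfrac{c}{b}f(n)\qquad\text{for all }n\ge N.$$
Viewing this as an identity in $\mathbb{Q}[X]$ that holds at infinitely many integers, I conclude $g=c-\tfrac{c}{b}f$ identically, which is the first assertion.

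For the "moreover" part, I would substitute $g=c-\tfrac{c}{b}f$ back into the recurrence, obtaining for every $n>0$
$$a_n-cb^n=f(n)\bigl(a_{n-1}-cb^{n-1}\bigr).$$
Writing $e_n:=a_n-cb^n$ and iterating gives $e_n=e_0\prod_{i=1}^n f(i)$; since $a_0=g(0)=c-\tfrac{c}{b}f(0)$ we have $e_0=-\tfrac{c}{b}f(0)$, hence $e_n=-\tfrac{c}{b}\prod_{i=0}^n f(i)$. As $b,c\neq0$, it follows that $a_n=cb^n$ if and only if $f(i)=0$ for some $i\in\{0,\ldots,n\}$.

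Finally, the hypothesis that $a_n=cb^n$ for $n\gg0$ gives $e_N=0$ for some $N$, so $f$ has a non-negative integer root and $n_0=\min\{n\in\mathbb{N}:f(n)=0\}$ is well defined. For $n\ge n_0$ the product $\prod_{i=0}^n f(i)$ contains the factor $f(n_0)=0$, so $a_n=cb^n$; for $n<n_0$ none of $f(0),\ldots,f(n)$ vanishes by minimality of $n_0$, so $e_n\neq0$ and $a_n\neq cb^n$. This completes the argument. I do not expect a genuine obstacle here; the only points requiring care are working in $\mathbb{Q}[X]$ to accommodate the coefficient $\tfrac{c}{b}$, and deducing the existence of $n_0$ from the ultimately-geometric assumption rather than taking it for granted.
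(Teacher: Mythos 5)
Your proof is correct and follows essentially the same route as the paper: reduce to $h=b$ via the first theorem, substitute $a_n=cb^n$ into the recurrence to obtain the polynomial identity $g=c-\tfrac{c}{b}f$, and then analyse the error term $e_n=a_n-cb^n$ through the relation $e_n=f(n)e_{n-1}$. Your closed form $e_n=-\tfrac{c}{b}\prod_{i=0}^{n}f(i)$ is merely a slightly cleaner packaging of the paper's step-by-step case analysis, with the advantage that it settles both directions of the equivalence (and the existence of $n_0$) uniformly.
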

\begin{proof}
It is easy to see that $bc=0$ would imply $gh \equiv 0$. We already know from previous theorem that $h = b$, so for $n \gg 0$ we have
\begin{align*}
cb^n = & f(n)cb^{n-1} + g(n)b^n\\
cb = & f(n)c + g(n)b\\
g(n) = & c - f(n) \tfrac c b
\end{align*}
If two polynomials are equal for infinitely many values, then they are equal, so $g =c - \tfrac{c}{b} f$.

Now, if $a_n = cb^n$ and $a_{n-1} \neq cb^{n-1}$, then
$$cb^n = a_n = f(n)a_{n-1}+g(n)b^n = f(n)a_{n-1} + cb^n -cb^{n-1}f(n),$$
which implies
$$0 = f(n)\left(a_{n-1} - cb^{n-1}\right).$$
Thus, $f(n)$ must be zero. On the other hand, if $a_{n-1} = cb^{n-1}$, then
$$a_n = f(n)cb^{n-1} + g(n)b^n = f(n)cb^{n-1} + cb^n - f(n)cb^{n-1} = cb^n.$$
If $f(n) = 0$, then
$$a_n = f(n)a_{n-1} + g(n)b^n = f(n)(a_{n-1} - cb^{n-1}) + cb^n = cb^n.$$
From this we can see, that if $a_n=cb^n$ for $n\geq n_0$, then $n_0$ is the least non-negative integer zero of the polynomial $f$.
\end{proof}

\section{Counterexamples}

The following are examples of sequences $\textbf{a}\in\mathcal{R}$ being ultimately geometric progressions but not geometric progressions and such that the set $\mathcal P_{\bf a}$ is finite. The presented sequences are counterexamples for both Conjectures \ref{c1} and \ref{c2}.

\begin{exa}
The following example shows that for each pair $(b,c)$ of integers there exists a sequence $\textbf{a}=(a_n)_{n\in\mathbb{N}}\in\mathcal{R}$ such that it is not a geometric progression but $a_n=cb^n$ for $n\gg 0$. Let $f = b(2-X), g = c(X-1), h = b$ and $(a_n)_{n\in\mathbb{N}} = \mathbf{a}(f,g,h)$. Then, we have 
\begin{align*}
    a_0 =& -c,\\
    a_1 =& b\cdot (-c) + 0\cdot b = -cb,\\
    a_2 =& 0\cdot (-bc) + c\cdot b^2 = cb^2,\\
    a_n =& b(2-n)\cdot cb^{n-1} + c(n-1)\cdot b^{n} = cb^n \text{ for } n > 2.
\end{align*}
So, we see that this sequence is not a geometric progression despite being ultimately geometric. Because $\mathcal P_{\bf a} = \left\{p\in \mathbb P : p \mid bc\right\}$, we thus see that $\mathcal P_{\bf a}$ is finite on condition that $bc\neq 0$.
\end{exa}

\begin{exa}\label{ex2}
This example shows that for each $n_0\in\mathbb{N}$ there exists a sequence $\textbf{a}=(a_n)_{n\in\mathbb{N}}\in\mathcal{R}$ without zero terms such that $(a_n)_{n\geq n_0}$ is a geometric progression but $(a_n)_{n\geq n_0-1}$ is not. Let us take some $b',c'\in\mathbb{Z}$ and put $f=2b'n_0!-2b'\prod_{j=0}^{n_0-1}(X-j)$, $g=-c'n_0!+2c'\prod_{j=0}^{n_0-1}(X-j)$ and $h=b'n_0!$. Then, the terms of the sequence $\textbf{a}(f,g,h)$ are equal to
\begin{align*}
    a_n=& (1-2^{n+1})c'b'^nn_0!^{n+1}, \mbox{ for } n<n_0,\\
    a_n=& c'b'^nn_0!^{n+1}, \mbox{ for } n\geq n_0.
\end{align*}
As we can see, $a_n=cb^n$ for $n\geq n_0$, where $b=b'dn_0!$ and $c=c'dn_0!$. On the other hand $a_n\neq cb^n$ for any $n\in\{0,...,n_0-1\}$. If $bc\neq 0$, then the set $\mathcal{P}_{\bf a}$ is finite.
\end{exa}

\begin{exa}\label{ex3}
This example also shows that for each $n_0\in\mathbb{N}$ there exists a sequence $\textbf{a}=(a_n)_{n\in\mathbb{N}}\in\mathcal{R}$ without zero terms such that $(a_n)_{n\geq n_0}$ is a geometric progression but $(a_n)_{n\geq n_0-1}$ is not. To the opposition of the previous example, the terms of the sequence $\textbf{a}(f,g,h)$ have the same sign if $b>0$. Let us take some $b',c',d\in\mathbb{Z}$ and put $f=b'n_0!-b'\prod_{j=0}^{n_0-1}(X-j)$, $g=c'(d-1)n_0!+c'\prod_{j=0}^{n_0-1}(X-j)$ and $h=b'dn_0!$. Then, the terms of the sequence $\textbf{a}(f,g,h)$ are equal to
\begin{align*}
    a_n=& (d^{n+1}-1)c'(b')^n(n_0!)^{n+1}, \mbox{ for } n<n_0,\\
    a_n=& c'(b')^nd^{n+1}(n_0!)^{n+1}, \mbox{ for } n\geq n_0.
\end{align*}
As we can see, $a_n=cb^n$ for $n\geq n_0$, where $b=b'dn_0!$ and $c=c'dn_0!$. On the other hand $a_n\neq cb^n$ for any $n\in\{0,...,n_0-1\}$. If $bc\neq 0$, then the set $\mathcal{P}_{\bf a}$ is finite.
\end{exa}

\begin{rem}
If we allow to consider polynomials $f,g,h$ belonging to the ring $\mathbb{Q}_{\mathbb{Z}}[X]:=\{P\in\mathbb{Q}[X]: P(\mathbb{Z})\subset\mathbb{Z}\}$, then putting $f=2b-\frac{2b}{n_0!}\prod_{j=0}^{n_0-1}(X-j)$, $g=-c+\frac{2c}{n_0!}'\prod_{j=0}^{n_0-1}(X-j)$ and $h=b$ we obtain for any pair $(b,c)$ of integers and $n_0\in\mathbb{N}$ a sequence of the form $\textbf{a}(f,g,h)=(a_n)_{n\in\mathbb{N}}$ such that $a_n=cb^n$ if and only if $n\geq n_0$. Just as in Example \ref{ex2} we obtain $a_n=(1-2^{n+1})cb^n$ for $n<n_0$.

Similarly we can modify Example \ref{ex3}. Taking $f=b'-\frac{b}{n_0!}'\prod_{j=0}^{n_0-1}(X-j)$, $g=c'(d-1)+\frac{c'}{n_0!}\prod_{j=0}^{n_0-1}(X-j)$ and $h=b'd$, we get
\begin{align*}
    a_n=& (d^{n+1}-1)c'(b')^n, \mbox{ for } n<n_0,\\
    a_n=& cb^n, \mbox{ for } n\geq n_0,
\end{align*}
where $b=b'd$ and $c=c'd$.
\end{rem}

\section{Further study}
Despite the fact that Conjectures \ref{c1} and \ref{c2} turned out to be false, there remains an open problem of classification of the sequences $\textbf{a}\in\mathcal{R}$ such that the set $\mathcal{P}_{\bf a}$ is finite. We know, that $\textbf{a}\in\mathcal{R}$ may be bounded but if this is the case, then $\mathbf{a}$ is an ultimately geometric progression with ratio being $-1$, $0$ or $1$ (see \cite{miska2}). Hence, at this moment, the only known sequences of the form $\textbf{a}\in\mathcal{R}$ with finite set $\mathcal{P}_{\bf a}$ are ultimately geometric progressions. This is why we state the revised version of Conjecture \ref{c1} as follows.

\begin{con}
If $\textbf{a}\in\mathcal{R}$ is not an ultimately geometric progression, then the set $\mathcal P_{\bf a}$ is infinite.
\end{con}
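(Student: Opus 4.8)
The plan is to argue by contradiction. Assume $\mathbf a=\mathbf a(f,g,h)$ is not an ultimately geometric progression but that $\mathcal P_{\mathbf a}=\{p_1,\dots,p_k\}$ is finite. If $a_{n_0}=0$ for some $n_0$, then $\mathcal P_{\mathbf a}=\mathbb P$; hence we may assume $a_n\neq 0$ for all $n$. If $gh\equiv 0$, then $a_n=g(0)\prod_{i=1}^n f(i)$; from $a_0\neq 0$ we get $g(0)\neq 0$ and that $f$ has no non-negative integer zero, so a classical theorem of Schur on prime divisors of the values of a polynomial forces $\mathcal P_{\mathbf a}$ to be infinite unless $f$ is constant --- but then $\mathbf a$ is a geometric progression. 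Thus from now on $g\not\equiv 0$, $h\not\equiv 0$ and $a_n\neq 0$ for every $n$.

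The core of the plan is a growth dichotomy. Using the unrolled recurrence $a_n=\sum_{j=0}^n g(j)h(j)^j\prod_{i=j+1}^n f(i)$ I would prove that either $\limsup_{n\to\infty}|a_n|^{1/n}=\infty$, or $h$ is constant; and in the latter case either $f$ is constant as well, or the ``tail-cancellation'' identity $\sum_{j\ge 0}g(j)h(j)^j/\prod_{i=1}^j f(i)=0$ holds. Assume first that $\limsup_n|a_n|^{1/n}=\infty$. Since $a_n\neq 0$ and $\mathcal P_{\mathbf a}=\{p_1,\dots,p_k\}$, we have $|a_n|=\prod_{i=1}^k p_i^{v_{p_i}(a_n)}$, so along an infinite set of indices some fixed prime $p\in\mathcal P_{\mathbf a}$ satisfies $v_p(a_n)/n\to\infty$. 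On the other hand, writing $\alpha_n=v_p(a_n)$, the ultrametric inequality applied to $a_n=f(n)a_{n-1}+g(n)h(n)^n$ forces $\alpha_n=\min\bigl(v_p(f(n))+\alpha_{n-1},\,v_p(g(n))+n\,v_p(h(n))\bigr)$ whenever the two quantities differ; since the second quantity equals $v_p(g(n))$, which is $O_p(\log n)$, for all but the $O_p(n/p)$ indices with $p\mid h(n)$, the valuation $\alpha_n$ is repeatedly reset to size $O_p(\log n)$, and bounding the sporadic jumps arising from coincidences of the two quantities should yield $\alpha_n=O_p(n)$ --- contradicting $v_p(a_n)/n\to\infty$.

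It then remains to deal with the case $h\equiv b\in\mathbb Z$. If $b=0$ we are back in the situation $gh\equiv 0$; if $|b|=1$ then $\mathbf a$ is bounded, hence by \cite{miska2} an ultimately geometric progression, contrary to hypothesis; so $|b|\ge 2$. When $f$ is constant, say $f\equiv q$, a direct computation with $c_n=a_n/b^n$ yields a closed form $a_n=b^n Q(n)+B\,q^n$ with $Q\in\mathbb Q[X]$ of degree $\deg g$ and $B\in\mathbb Q$ (and $a_n=b^n Q(n)$ with $\deg Q=\deg g+1$ in the sub-case $q=b$). If $Q$ is non-constant --- separating the two parity subsequences when $q=-b$ --- the sequence contains a block of the form $b^n$ times a non-constant polynomial, and Schur's theorem forces $\mathcal P_{\mathbf a}$ infinite. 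If $Q$ is constant (equivalently $g$ is constant), then $a_n=Q_0 b^n+B q^n$ with $q\neq b$: the case $B=0$ is a geometric progression (excluded), the case $B\neq 0$, $q\neq -b$ is a nondegenerate binary recurrence, which has infinitely many prime divisors by the classical theory of Lucas-type sequences, and the case $q=-b$ gives $a_n=0$ for every $n$ of one parity, so $\mathcal P_{\mathbf a}=\mathbb P$. Finally, if $f$ is non-constant and $h\equiv b$, we are in the tail-cancellation regime; here I expect to show that $(a_n)$ necessarily has the shape $a_n=b^n Q(n)$ for some polynomial $Q$ --- the telescoping that turns $f=X$, $h=1$, $g=-X^2+X+1$ into $a_n=n+1$, and $f=X$, $h=2$, $g=-X^2+X+4$ into $a_n=2^n(2n+4)$, is the prototype --- so that Schur's theorem applies once more, and this regime contains no genuine counterexample.

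Two points should be the real obstacles. The first is the uniform bound $v_p(a_n)=O_p(n)$ used in the super-exponential case: the ``jumps'' in $v_p(a_n)$ are governed by the residues of $a_{n-1}$ modulo powers of $p$, and ruling out their accumulation requires controlling the eventually periodic sequence $(a_n\bmod p^m)_n$, whose period can be as large as $p^{m-1}(p-1)$; this congruence bookkeeping is where the difficulty really sits. The second is the tail-cancellation regime with $f$ non-constant: there $(a_n)$ is $P$-recursive but not a genuine linear recurrence, so the classical theorems on prime divisors of recurrence sequences do not apply, and establishing that $\mathcal P_{\mathbf a}$ is infinite (equivalently, that $a_n=b^n Q(n)$ with $Q$ a polynomial) seems to need a primitive-prime-divisor argument adapted to the hypergeometric-type sums $\sum_j g(j)h(j)^j\prod_{i>j}f(i)$; no such tool is currently available, and developing one is, I believe, the essential content of the conjecture.
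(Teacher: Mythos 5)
The statement you are addressing is the \emph{revised} conjecture that the paper itself leaves open: Section 4 states it precisely because the authors do not know how to prove it, so there is no proof in the paper to compare against, and your text is a research plan rather than a proof. By your own account the plan reduces the conjecture to two sub-problems and resolves neither: (i) the bound $v_p(a_n)=O_p(n)$ in the super-exponential regime, where the coincidence case $v_p(f(n))+v_p(a_{n-1})=v_p(g(n)h(n)^n)$ lets the valuation jump without control, and controlling the orbit of $a_n$ modulo $p^m$ for all $m$ simultaneously is exactly the hard part (even for the classical derangements $D_n=nD_{n-1}+(-1)^n$ this kind of $p$-adic control is a nontrivial theorem in its own right, and nothing analogous is known for general $f,g,h$); and (ii) the tail-cancellation regime $\sum_{j\geq 0}g(j)h(j)^j/\prod_{i=1}^{j}f(i)=0$ with $f$ non-constant, for which you concede no tool exists. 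Since both branches are essential to the case analysis, the argument does not close.

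Beyond the admitted gaps there is one concrete false step: in the case $h\equiv b$ with $|b|=1$ you assert that $\mathbf a$ is bounded and invoke \cite{miska2}. That is wrong: for $f=X$, $g=1$, $h=1$ the recurrence $a_n=na_{n-1}+1$ grows like $e\cdot n!$, and more generally boundedness fails whenever $|f(n)|\geq 2$ eventually; the cited result gives the implication in the opposite direction (bounded $\Rightarrow$ ultimately geometric with ratio in $\{-1,0,1\}$), not $|b|=1\Rightarrow$ bounded. The sequences this shortcut was meant to dispose of are instead swept into your two open regimes, so the error does not create an additional uncovered case, but the step as written would fail. The sound parts of the proposal --- the reduction to $a_n\neq 0$, the $gh\equiv 0$ case via Schur, the dichotomy forcing $h$ constant when $\limsup|a_n|^{1/n}<\infty$, and the complete treatment of constant $f$ and $h$ via Schur and Zsygmondy-type results for binary recurrences --- are a reasonable framing of the problem, but they are the easy part; the conjecture remains open.
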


\end{document}